\newcommand{\Gr}{Gr\"obner }
\newcommand{\In}{\textnormal{In}}
\newcommand{\St}{\mathcal{S}t_h}
\newcommand{\PP}{\mathbf{P}} 
\newcommand{\Af}{\mathbb{A}}
\newcommand{\ZZ}{\mathbb{Z}}
\newcommand{\cN}{\mathcal{N}}
\newcommand{\BSt}{B\mathcal{S}t}
\newcommand{\PS}{\mathcal{P}}
\newtheorem{lemma}{Lemma}
\newtheorem{theorem}[lemma]{Theorem} 
\newtheorem{cor}[lemma]{Corollary}
\theoremstyle{definition}
\newtheorem{example}[lemma]{Example}
\newtheorem{remark}[lemma]{Remark} 
\newtheorem{defin}[lemma]{Definition}
\begin{document}

\title{The stratum of a strongly stable ideal}

\author[M.~Roggero]{Margherita Roggero}
\address{Margherita Roggero,  Universit\`a degli Studi di Torino, Dipartimento di Matematica,  Via Carlo Alberto 10, \  10123 Torino  (Italy)}
\email{mailto:margherita.roggero@unito.it}
\urladdr{http://www2.dm.unito.it/paginepersonali/roggero/}

\thanks{Written with the support of the University Ministry funds.}
\subjclass[2000]{13PXX,  14C05}
\keywords{ Strongly stable ideal, Borel ideal, marked set, Gr\"obner  basis, Buchberger criterion.} 

\begin{abstract}  Let $J$ be  a strongly stable monomial ideal in $\mathcal{P}=k[X_0, \dots,X_n]$ and  let $B\mathcal{S}t(J)$ be the family of all the homogeneous ideals in $\mathcal{P}$ such that the set $\mathcal{N}(J)$ of all the monomials that do not belong to  $ J$ is a $k$-vector basis of the quotient $\mathcal{P} /I$.   We show that  $I\in B\mathcal{S}t(J)$ if and only if it is generated by a special set of polynomials $G$, the $J$-\emph{basis} of $I$,  that  in some sense  generalizes the notions of Gr\"obner  and border basis (Theorem 10 and Corollary 12). Though not every $J$-basis is a Gr\"obner  basis with respect to some term ordering (Example 20),  we define two Noetherian algorithms of reduction with respect to $G$, the $G^*$-reduction (Definition 9) and the $G^{**}$-reduction (Definition 15) and prove that $J$-bases    can be characterized through a Buchberger-like criterion on the $G^{**}$-reductions of $S$-polynomials (Theorem 17). Using $J$-bases, we prove that $B\mathcal{S}t(J)$ can be endowed, in a very natural way, of a structure of affine scheme,  and that it turns out to be homogeneous with respect to a non-standard grading  over the additive group $\mathbb{Z}^{n+1}$ (Theorem 22).
 \end{abstract}
  \maketitle 
\section{Introduction}

Let $J$ be any monomial ideal in the polynomial ring $\PS:=k[X_0, \dots, X_n]$ and let us denote by $\cN (J)$  the sous-escalier of  $J$ that is the set of monomials in $\PS$ that do not belong to $J$.  In this paper we  consider  the family of all ideals $I$ in $\PS$ such that $\PS=I\oplus \langle \cN (J) \rangle $ as a $k$-vector space or, equivalently, such that $\cN (J)$ is a $k$-basis for the quotient $\PS /I$.  In this paper we investigate under which  conditions    this family is in some natural way an algebraic scheme. 

If $\cN (J)$ is a finite set, there is an evident close connection with the theory of border bases (see \cite{Mour}, \cite{ro}). 
 However, if $\cN (J)$ is not finite,  the family of such ideals can be too large. For instance if $J=(X_0)\subset k[X_0,X_1]$, the family of all ideals $I$ such that $\PS/I$ is generated by $\cN (J)=\{X_1^n, \ n\in \mathbb{N}\}$ depends on infinitely many parameters. For this reason we restrict ourselves to the homogeneous case. So, let us denote by $\BSt (J)$ the family of all \emph{homogeneous} ideals $I$ in $\PS$ such that $\PS=I\oplus \langle \cN (J) \rangle $, so that every polynomial has a unique $J$\emph{-normal form modulo} $I$ as a sum of monomials  in $\cN(J)$.
 
 Of course,        $\BSt(J)$  contains    every   ideal   $I$ such that its initial ideal  $\In_{\prec}(I)$ with respect to some term order $\prec$ is $J$, but it can also contain ideals $I$ such that $\In_\prec(I)\neq J$ for every term order $\prec$ (see for instance Example \ref{nuovo}).

Let $I$ be  an ideal in $\BSt(J)$. Using the terminology of \cite{CLO2} and \cite{SturmPolitopi},  $I $ contains a unique set of \emph{marked polynomials} $G=\{ f_\alpha= X^{\alpha} - \sum  {c}_{\alpha \gamma} X^{\gamma}\ / \ X^\alpha \in B_J\}$ where every polynomial $f_\alpha$ is given by an \emph{initial term} (or \emph{head}),   $X^\alpha$ which belongs to the monomial basis $B_J$ of $J$, and \emph{a tail} $\sum  {c}_{\alpha \gamma} X^{\gamma}$, which is a sum of   monomials   in $\cN(J)$ with constant coefficients $c_{\alpha \gamma}\in k$ ($c_{\alpha \gamma}\neq 0$ only if $\deg (X^\gamma)=\deg(X^\alpha)$). If $J=\In_\prec (I)$ for some term order $\prec$, then $G$ is indeed the reduced \Gr basis of $I$, hence it satisfies many interesting properties: it is a set of generators for $I$, it can be obtained using Buchberger algorithm, the $G$-\emph{reduction relation}   that   uses  the polynomials in $G$ as \lq\lq rewriting lows\rq\rq\  is Noetherian, and so on.

Unfortunately, if $\In_\prec(I)\neq J$ for every term order $\prec$,  the set of polynomials $G$ has not in general the good properties of  \Gr bases.  For instance  $G$ does not need to be a set of generators for $I$ (see Example \ref{es1}). Moreover, though by definition every polynomial in $\mathcal{P}$ has a $J$-normal form modulo $I$, we cannot in general obtain it through  a $ G$-reduction,  because every  sequence of $G$- reductions  on some element can lead to an infinite loop  (see Example \ref{es-8}).

We can recover in $G$ most of the good properties of \Gr bases when  $J$ is strongly stable.  For this reason we assume that $J$ is a strongly stable monomial ideal in all our statements.   As well known, in characteristic 0, this condition is equivalent to say that $J$ is Borel fixed. Note that, even under this assumption,  $G$ does not need to be a marked \Gr basis  that is a \Gr basis with respect to some term ordering (see \cite{MoraR} or \cite[page 428]{CLO2}). Hence not every sequence of  $G$-reductions  on a polynomial terminates giving   a $J$-normal form (see Example \ref{ripreso} and \cite[Theorem 3.12]{SturmPolitopi}).  However,      we  prove that   for  every polynomial $h$ in $\PS$ there is also some    $G$-reduction  giving  in a finite number of steps   the $J$-normal form modulo $I$  of $h$  (Corollary \ref{th1}) and we exhibit  two  Noetherian algorithms of reduction, the $G^*$-\emph{reduction} (Definition \ref{stella}) and the $G^{**}$-\emph{reduction} (Definition \ref{stellastella}), giving the $J$-normal form  modulo $I$ of every polynomial.  

Using the above quoted procedures of reduction,    we are able to prove     for every   strongly stable monomial ideal $J$ that $I\in \BSt(J)$ if and only if $I$ is generated by  the only set of marked polynomials $G=\{ f_\alpha= X^{\alpha} - \sum  {c}_{\alpha \gamma} X^{\gamma}\ / \ X^\alpha \in B_J\}$ it contains     (Corollary \ref{cor1}). In this case,  by analogy with \Gr basis,  we call $G$ the $J$\emph{-basis of} $I$. We also prove  that one can check whether a set of marked polynomials $G$ as above is a $J$-basis, namely  whether  it generates an ideal  $I\in\BSt(J)$,  through a Buchberger-like criterion, that is only looking at the $S$-polynomials   of elements in $G $ and to their $G^{**}$-reductions (Theorem \ref{riduzione}).  

The above quoted results allow us to extend to the family $\BSt(J)$ some properties of the \emph{\Gr stratum} $\St(J,\prec)$,  that is the family of all the ideals $I$ such that $\In_\prec(I)=J$.  In the last years several authors have been working on   $\St (J, \preceq)$,    proving that they have a natural and well defined structure of algebraic schemes, that springs out of a procedure based on Buchberger's algorithm (see \cite{CF}, \cite{NS}, \cite{RT}, \cite{FR}) and that they are homogeneous with respect to a non standard positive  grading over $\ZZ^{n+1}$ (see \cite{RT}).

Following the line  of the construction of $\St(J,\prec)$,    in the last section  of the paper we consider  for a  strongly stable ideal $J$  the   set of homogeneous polynomials 
 $\mathcal{G}=\{ F_\alpha= X^{\alpha} - \sum  {C}_{\alpha \gamma} X^{\gamma}\ / \ X^\alpha \in B_J, X^\gamma \in \cN (J)_{\vert \alpha \vert}\}$ 
where the coefficients $C_{\alpha \gamma}$ are new variables.   Imposing the above quoted  Buchberger criterion on the $\mathcal{G}^{**}$-reductions of $S$-polynomials of elements in $\mathcal{G}$,  we obtain some polynomial conditions in the   variables $C=\{C_{\alpha \gamma}\}$. The  ideal in $k[C]$  generated by these polynomials   realizes $\BSt(J)$ as an affine scheme. A possible   objection  to our construction   is that it depends on the procedure of reduction, which  is not  in general unique, and so a priori the result is  not well defined.  For this reason we first give an   intrinsic  definition of   $\BSt(J)$ as the affine scheme given by the ideal generated by minors of some matrices, and  then we show the equivalence with the one obtained through the Buchberger criterion (Theorem \ref{th2}).

By the analogy with \Gr strata, we  call $\BSt(J)$ the \emph{stratum} of $J$.  As we adopted in the two cases  a similar  construction, we can easily prove that for every fixed term ordering $\prec$,  the \Gr stratum $\St(J,\prec)$ is (scheme-theoretically) the  section of $\BSt(J)$ with a suitable linear space.

Going on  the  analogy, we consider the homogeneous structure   and  show that also    $\BSt(J)$ is homogeneous  with respect to a    grading over $\ZZ^{n+1}$   (Theorem \ref{th2}).  However, though the grading on $\St(J,\prec)$  is  positive, this property does not hold true in general for  $\BSt(J)$, so that we cannot extend to it some useful properties of \Gr strata. Especially, $\BSt(J)$ does not need to be isomorphic to an affine space when the point corresponding  to $J$ is smooth.   

In the final Example \ref{es5} we  give an explicit computation of a stratum $\BSt(J)$ which is scheme-theoretically isomorphic to an open subset of the Hilbert scheme of 8 points in $\PP^2$ (see \cite{RoggBorelCovering}). We show that it  strictly contains the family of marked \Gr bases, that is the union of the \Gr strata $\St(J,\prec)$ for every term order $\prec$, and that it is not isomorphic to an affine space, even though the point corresponding to $J$ is smooth. 

\section{Notation} \label{sec:notation}

Throughout the paper, we will consider the following general notation. 

 We work on an algebraically closed ground field $k$ of  any characteristic.
$\PS=k[X_0,\dots,X_n]$ is the polynomial ring in the set of variables $X_0,\dots,X_n$ that we will often denote by the compact notation $X$. We will denote by $X^\alpha$ any monomial in $\PS$, where $\alpha$ represents a multi-index $(\alpha_0,\dots,\alpha_n)$, that is $X^\alpha = X_0^{\alpha_0} \cdots X_n^{\alpha_n}$. $X^\alpha \mid X^\gamma$ means that $X^\alpha$ divides $X^\gamma$, that is there exists a monomial $X^\beta$ such that $X^\alpha \cdot X^\beta = X^\gamma$. 

Every ideal in $\PS$ will be homogeneous.
 If  $J$ is a monomial ideal  in $\PS$, $B_J$ will denote its  monomials basis and $\cN (J)$  its  \emph{sous-escalier}  that is the set of monomials that do not belong to $J$. We will denote by $J_m$ and $\cN( J)_m$ respectively the elements  of degree $m$ in either set.

 We fix the following  order on the set of variables  $X_0< X_1<\dots < X_n$. For every monomial $X^\alpha\neq 1$ we set $\min (X^\alpha)=\min \{ X_i \ \colon \ X_i \mid X^\alpha \}$ and $\max (X^\alpha)=\max \{ X_i \ \colon \ X_i \mid X^\alpha \}$. 

A monomial $X^\beta$ can be obtained by a monomial $X^\alpha$ through  an \emph{elementary  move} if   $X^\beta X_i=X^\alpha X_j$ for some variables $X_i\neq X_j$ or equivalently if there is a monomial $X^\delta$ such that $X^\alpha=X^\delta X_i$ and $X^\beta=X^\delta X_j$. We will say that the elementary move from $X^\alpha$ to $X^\beta$ is \emph{down} if $X_i > X_j$ and \emph{up} if  $X_i < X_j$.
 
  The transitive closure of the  elementary moves gives a quasi-order on the set of monomials of any fixed degree that we will denote by $\prec_{B}$:
$X^\alpha  \prec_{B} X^\beta $ if and only if we can pass from $X^\alpha$ to $X^\beta$ with a sequence of up elementary moves.
Note that $\prec_B$  agrees with  every term ordering $\prec$ on $\PS$ such that $X_0\prec \dots \prec X_n$, that is: $X^\alpha \prec_B X^\beta$ $\Rightarrow$  $X^\alpha \prec X^\beta$.

A monomial ideal $J \subset \PS$ is \emph{strongly stable} if and only if it contains every monomial $X^\alpha $ such that  $ X^\alpha \succ_{B} X^\beta $  and $X^\beta\in J$. If $ch (k)=0$ this  is equivalent to say that $J$ is Borel-fixed, that is   fixed under the action of the Borel subgroup of lower-triangular invertible matrices (see \cite{Eisenbud}, \S 15.9, or  \cite{Gr}).

\section{Generators of the quotient $\PS /I$ and generators of $I$}\label{sec:esempi}

The aim of this paper  is  to generalize the construction  of the homogeneous \Gr stratum $\St(J,\preceq)$ of a monomial ideal $J$ in $ \PS$ (see \cite{CF}, \cite{NS}, \cite{RT}, \cite{FR}, \cite{LR}). Roughly speaking,  $\St(J,\preceq)$ is an affine scheme that parametrizes all the homogeneous ideals  $I\subset \PS$ whose initial ideal with respect to a fixed term ordering $\preceq$ is $J$. The main tools used in the construction of this family are  reduced \Gr bases and Buchberger algorithm.  

If $B_J$ is the monomial basis of $J$, an ideal $I$ belongs to $\St (J,\preceq)$ if and only if its  reduced \Gr basis is of the type $\{ f_{\alpha} = X^{\alpha} - \sum  {c}_{\alpha \gamma} X^{\gamma}\ / \ X^\alpha \in B_J\}$ where  $c_{\alpha \gamma}\in k$,    $X^\gamma\in \cN (J)$ and  $X^\gamma \prec X^\alpha$ (of the same degree). Due to the good properties of \Gr bases, we can also observe that $\cN (J)$ is a basis of $\PS /I$ as a $k$-vector space.

 In this section we start from  the last property and explore the possibility of generalizing the above construction not making use of     any term ordering  in $\PS$. 
   
   \begin{defin} For every   monomial ideal $J$ in  $\PS$, the \emph{stratum} of $J$ is the family, that we will denote by  $\BSt(J)$, of all homogeneous ideals $I$ such that $\cN (J)$ is a basis of  the quotient $S/I$ as a $k$-vector space. 
\end{defin}

As a direct consequence of the definition, if $I\in \BSt (J)$, then we can associate to every polynomial $f$ in $\PS$   a $J$-\emph{normal form} modulo $I$, namely   a polynomial $g=\sum d_{\gamma}X^\gamma$ with $X^\gamma \in \cN(J)$ and $d_\gamma \in k$ such that $f-g\in I$. Note that if  $f$ is homogeneous, then also its $J$-normal form is,  because  $I$ is homogeneous and $\cN (J)$ is a basis of the quotient. 

Especially, we can consider the $J$-normal forms $\sum c_{\alpha \gamma}X^\gamma$  modulo $I$ of any  monomial $X^\alpha \in B_J$ and the   set of homogeneous polynomials  $G:=\{f_\alpha=X^\alpha-\sum c_{\alpha \gamma}X^\gamma \ / \ X^\alpha \in B_J\}\subset I$, that looks like  (but  does not need to be) a  \Gr basis.  
Following the terminology of \cite{SturmPolitopi}, we can consider  $G$ as a set of \emph{marked polynomials}, fixing $X^\alpha$ as  $\In(f_\alpha)$. We will call \emph{tail} of $f_\alpha$ the difference $X^\alpha -f_\alpha=\sum c_{\alpha \gamma}X^\gamma $.

 It is a natural question to ask whether any ideal $I$, containing a set of polynomials $G$ as above,   needs to belong to $\BSt(J)$.  This motivates the following:
\begin{defin}\label{defJbase} Let $J$ be a monomial ideal in $\PS$ with monomial basis  $B_J$. We will call $J$\emph{-set of marked polynomials} or simply  $J$-\emph{set}  any set of homogeneous  polynomials  of the type: 
\begin{equation}\label{eq:Jbase}
 G=\left\{ f_{\alpha} = X^{\alpha} - \sum  {c}_{\alpha \gamma} X^{\gamma} \ / \ X^\alpha \in B_J\right\} \hbox{ with }  \ X^{\gamma} \in \cN (J), \ {c}_{\alpha \gamma} \in k .
\end{equation}
Moreover we will say that $G$ is a $J$\emph{-basis} if   $\cN (J)$ is a basis of $\PS/(G)$ as a $k$-vector space.
\end{defin}

 It is quite obvious from the definition, that the ideal generated by a $J$-basis has the same Hilbert function than $J$.

\medskip

The following examples shows that  not every $J$-set is also a $J$-basis and  that, more generally,     $J$-sets or even  $J$-bases do not have the good properties of \Gr bases.  Moreover, we will see that not every $J$-basis, as a marked set, is also a \Gr basis with respect to some term ordering, even if $J$ is a strongly stable monomial ideal.

\begin{example}\label{es1} In $k[x,y,z]$ let $J=(xy,z^2)$ and $I=(f_1=xy+yz, f_2=z^2+xz)$. The ideal $I$ is generated by a $J$-set. However $J$ defines a   $0$-dimensional subscheme  in $\PP^2$, while $I$ defines a $1$-dimensional subscheme (because it contains the line $x+z=0$). Therefore, $\{ f_1,f_2 \}$ is not a $J$-basis because $I$ and $J$ do not have the same Hilbert function.
\end{example}

Even when  the ideal $I$ is generated by a  $J$-set and $I$ and $J$ share the same  Hilbert function,  the $J$-set is not necessarily a $J$-basis, that is   $\cN (J)$ does not need to be a basis for $\PS /I$ as a $k$-vector space. 

\begin{example}\label{es2}  In $k[x,y,z]$, let  $J=(xy,z^2)$ and let $I$ be the ideal generated by the $J$-set $\{g_1=xy+x^2-yz \ , \  g_2=z^2+y^2-xz\}$. It is easy to verify that both $J$ and $I$ are  complete intersections of two quadrics  and then they have the same Hilbert function. However, $I\notin \BSt(J)$ because $\cN (J)$ is not free in  $k[x,y,z]/I$: in fact $zg_1+yg_2=x^2z+y^3 \in I$ is a sum of monomials in $\cN (J)$. Hence $\{g_1,g_2\}$ is not a $J$-basis.
\end{example}

When the monomial ideal $J$ is the initial ideal of $I$ with respect to a term ordering, then $J$ is a basis of $\PS/I$ as a $k$-vectors space and the reduced \Gr basis of $I$ is indeed  the $J$-set contained in $I$ and it is also  $J$-basis for $I$. Hence $I$ has a $J$-basis which acts also as a set of generators.    However in general  the converse of these properties does not hold.  In fact even if $\cN(J)$ is a $k$-basis for $\PS/I$, the   $J$-set $G$ contained in $I$ does not need to be a $J$-basis because does not need to be a set of generators for  $I$. Furthermore, not every $J$-basis as a marked set is a \Gr basis with respect to a suitable term ordering. 

\begin{example}\label{es3} In $k[x,y,z]$ let $J=(xy,z^2)$ and $I=(f_1=xy+yz, f_2=z^2+xz, f_3=xyz)$. Both $I$ and $J$ define $0$ dimensional subschemes in $\PP^2$ of degree $4$ and in fact $I\in \BSt(J)$. In order to verify that $\cN (J)$ is a basis for $k[x,y,z]/I$ it is sufficient to show that, for every $m\geq 2$,  the $k$-vector space $V_m=I_m+\cN (J)_m=I_m+\langle x^m,y^m,x^{m-1}z, y^{m-1}z \rangle$ is equal to $k[x,y,z]_m$. For $m=2$, this is   obvious. Then, assume $m\geq 3$. First of all we observe that  $yz^2=zf_1-f_3\in I$: then $V_m$ contains all the monomials $y^{m-i}z^i$. Moreover  $x^2y=xf_1-f_3\in I$ and $xy^{m-1}=y^{m-2}f_1-zy^{m-1}\in V_m$: then $V_m$  contains all the monomials $x^{m-i}y^i$. Finally, we can see by induction on $i$ that all the monomials $x^{i}z^{m-i}$ belong to $V_m$. In fact as already proved,   $z^m\in V_m$, hence    $x^{i-1}z^{m-i+1}\in V_m$ implies $x^{i}z^{m-i}=x^{i-1}z^{m-i-1}f_2-x^{i-1}z^{m-i+1}\in V_m$. 

 However the  $J$-set $\{f_1,f_2\}$ is not a $J$-basis  because it does not generate $I$ (see Example \ref{es1}). 
\end{example}

\begin{example}\label{nuovo}  Let us consider in $k[x,y,z]$    the strongly stable monomial ideal $J$ generated by the set $B_J$ of all the degree 5 monomials in $(x^3,x^2y,xy^2,y^5)$  and    the ideal  $I$ generated by the marked set $G=B_J\cup \{f\}\setminus \{xy^2z^2\}$, where $f=xy^2z^2-y^4z-x^2z^3$ and $\In(f)=xy^2z^2$. As $J$ is strongly stable, the  results obtained  in  following sections will allow us to show that $G$ is the $J$-basis of $I$ (see Example \ref{ripreso}). However, the marked set $G$ is not a \Gr basis of $I$ with respect to any term ordering $\prec$, because  $xy^2z^2\succ y^4z$ and $xy^2z^2 \succ x^2z^3$  would be in contradiction with the equality  $(xy^2z^2)^2 =x^2z^3\cdot y^4z$.
\end{example}

\section{Characterizations of $J$-bases}
\textbf{From now on, $J$ will always denote a strongly stable monomial ideal in $\PS$.}

Under this assumption  we will see that  we can recover in $J$-bases most of the good properties of \Gr bases (though the two notions are not equivalent neither under this stronger condition on $J$, as shown by the above Example \ref{nuovo}). 
First of all we consider   properties that concern the reduction relation.
\begin{defin} Let  $G$ be a set of marked polynomials  
\begin{equation}\label{def:G} G=\{f_\alpha=X^\alpha-\sum c_{\alpha \gamma}X^\gamma \ / \ \In(f_\alpha)=X^\alpha \}.
\end{equation}
 A $G$\emph{-reduction} of a polynomial $h$, denoted by $h \stackrel{G}\longrightarrow h_0$ is a sequence:
$$h=h_0 \stackrel{G}\longrightarrow h_1 \stackrel{G}\longrightarrow  \dots \stackrel{G}\longrightarrow h_i\stackrel{G}\longrightarrow h_{i+1} \stackrel{G}\longrightarrow \dots \stackrel{G}\longrightarrow h_s$$
where $h_i\in\PS$ and each step $h_i\stackrel{G}\longrightarrow h_{i+1}$ is obtained  rewriting   a monomial $X^\beta X^\alpha$ ($X^\alpha =\In(f_\alpha)$) that appears in $h_i$    by $X^\beta \left(\sum c_{\alpha \gamma}X^\gamma \right)$. 
We say that such a sequence   is \emph{complete} and write $h\stackrel{G}\longrightarrow_+ h_s$ if   $h_s$   is $G$\emph{-reduced} that is does not contain  monomials multiple of some $X^\alpha$. 
\end{defin}

In \cite[Theorem 3.12]{SturmPolitopi}  \Gr bases are characterized in terms of reductions: there exists a term ordering such that $G$    is a \Gr basis  w.r.t. it if and only if the $G$-reduction is Noetherian, that is every sequence of $G$-reductions terminates in a finite number of steps.  Thus, if $G$ is not a \Gr basis, we can find some polynomial $h\in \PS$ and some procedure of $G$-reduction of $h$  that does not terminate in a finite number of steps. In the following example we exhibit a marked set  $G$ and a polynomial    $h$  such that $h$ has no complete reductions that is such that every $G$-reduction of $h$ does not terminate. 

\begin{example}\label{es-8} Let us consider the set of marked polynomials $G=\{  f_1=xy+yz, f_2=z^2+xz \}$ with $\In(f_1)=xy$ and $\In(f_2)=z^2$. The only possible first step of $G$-reduction of the monomial $h=xyz$ is $xyz-zf_1=-yz^2$. The only possible first step of $G$-reduction of $-yz^2$  is   $-yz^2+yf_2=xyz$, which is again the initial monomial. Observe that the ideal generated by the initial monomials $xy$ and $z^2$   is not strongly stable.
\end{example}
 Now we will prove  that  the  situation illustrated by the previous example is not allowed if we assume that  the initial monomials of the elements of $G$ generate a   strongly stable ideal, and present an algorithm giving  a complete $G$-reduction of every polynomial.

 \begin{defin}\label{stella} Let $J$ be a strongly stable monomial ideal with basis $B_J$ and $G$ be a set of marked polynomials: 
 \begin{equation}\label{eq5} G=\{f_\alpha=X^\alpha - \sum c_{\alpha \gamma} X^\gamma  \ / \ B_J=\{ \In(f_\alpha)=X^\alpha \} \}.\end{equation} 
  We define the  $G^*$\emph{-reduction} of a polynomial $h$, denoted $h\stackrel{G^*}\longrightarrow h_0$ as a special $G$-reduction which is a sequence of steps obtained in the following way. We proceed  by induction on the degree $m$  and   assume that every polynomial $g$ of degree $m-1$ either is in $J$-normal form or has a \emph{complete} $G^*$-\emph{reduction }  $g\stackrel{G}\longrightarrow_+  g_0$ that is a $G^*$-reduction  to a $J$-normal form $g_0$.  Assume that  $h$ has degree $m$ and is not in $J$-normal form. Then we fix a monomial  $X^\beta \in J$   that appears in $h$: 

  if $X^\beta=X^\alpha \in B_J$, we rewrite it using $f_\alpha\in G$; 

 if  $X^\beta \notin B_J$, that is $X^\beta=X_i X^{\delta}$ for some  $X^\delta \in J_{m-1}$,  we rewrite it using $X_ig_0$ where $X^\delta \stackrel{G}\longrightarrow _+ g_0$.
\end{defin}

\begin{theorem}\label{G*termina} Let  $J$ be a  strongly stable monomial ideal in $\PS$ with  monomial basis $B_J$ and let $G$ be as in (\ref{eq5}). Then the  $G^*$-reduction is Noetherian, that is every sequence of $G^*$-reductions of a polynomial $h$ terminates in a finite number of steps leading to a $J$-normal form $h_0$.
\end{theorem}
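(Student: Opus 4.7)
The plan is to proceed by induction on the degree $m$ of $h$. The base case $m < \min_{X^\alpha \in B_J}\deg(X^\alpha)$ is immediate: then $J_m=\emptyset$, so $h\in\langle\cN(J)_m\rangle$ is already a $J$-normal form and no $G^*$-step applies.

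For the inductive step I assume the $G^*$-reduction terminates for every polynomial of degree strictly less than $m$, and fix $h\in\PS_m$ together with an arbitrary $G^*$-reduction of $h$. The recursive definition splits each \emph{macro-step} at degree $m$ into either a Case 1 rewrite (a single $G$-micro-step that replaces $X^\beta\in B_J\cap\Supp(h)$ by the tail of $f_\beta$, a polynomial in $\langle\cN(J)_m\rangle$) or a Case 2 rewrite (a finite block of $G$-micro-steps replacing $X^\beta=X_iX^\delta\notin B_J$ by $X_ig_0$, where $g_0$ is a $J$-normal form of $X^\delta\in\PS_{m-1}$ produced by the inductive $G^*$-procedure in finitely many steps). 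Each macro-step is therefore finite, and the task reduces to bounding the number of macro-steps.

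I equip the polynomials of degree $m$ with the well-founded measure $M(h)=\{\!\{X^\beta:X^\beta\in\Supp(h)\cap J_m\}\!\}$, a finite multiset on $J_m$, ordered by the Dershowitz--Manna extension of a carefully chosen well-founded partial order $\lhd$ on $J_m$. The goal is to show that each macro-step produces a strictly smaller multiset. For Case 1 this is immediate: $X^\beta$ is removed and only $\cN(J)_m$-monomials are added. For Case 2 the new monomials are the $X_iX^{\gamma'}$ with $X^{\gamma'}\in\Supp(g_0)\subset\cN(J)_{m-1}$, and among those lying in $J_m$ I must verify $X_iX^{\gamma'}\lhd X_iX^\delta=X^\beta$.

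The main obstacle is the construction of $\lhd$ and the verification of the Case 2 descent, since the naive choice $\prec_B$ is insufficient: even for strongly stable $J$ there are $\prec_B$-incomparable pairs $X^{\gamma'}\in\cN(J)_{m-1}$, $X^\delta\in J_{m-1}$, so one cannot conclude $X_iX^{\gamma'}\prec_B X^\beta$ from $\prec_B$ alone. Strong stability enters essentially here: any $X^{\gamma'}\in\cN(J)_{m-1}$ with $X_iX^{\gamma'}\in J_m$ must come from a generator $X^\alpha=X_iX^{\alpha'}\in B_J$ with $X^{\alpha'}\mid X^{\gamma'}$, so one refines $\prec_B$ using the divisibility structure of $B_J$ (in the spirit of the Eliahou--Kervaire decomposition of a stable ideal) to obtain a well-founded order $\lhd$ on $J_m$ in which each such $X_iX^{\gamma'}$ lies strictly below $X^\beta$. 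Once the descent is established, well-foundedness of the multiset order forces the chain of macro-steps to be finite; the terminal polynomial has empty $J_m$-support, and the inductive hypothesis applied to its lower-degree components shows it is a $J$-normal form of $h$.
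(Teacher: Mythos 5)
Your overall architecture --- induction on the degree, finiteness of each macro-step via the inductive hypothesis, and a well-founded descent among the $J_m$-monomials in the support --- is essentially the same descent the paper runs, just packaged as a multiset termination argument rather than a minimal-counterexample argument. But you leave unproved exactly the point where strong stability does the work, and you flag it yourself: the construction of $\lhd$ and the Case 2 descent. The fact you do invoke (every $X^{\gamma'}\in\cN(J)_{m-1}$ with $X_iX^{\gamma'}\in J_m$ is divisible by a generator $X^\alpha$ with $X_i\mid X^\alpha$ and $X^\alpha/X_i\mid X^{\gamma'}$) is true but is not the inequality you need, and ``refine $\prec_B$ in the spirit of Eliahou--Kervaire'' is not a definition of an order, let alone a verification that the newly created $J_m$-monomials drop strictly below $X^\beta$. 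Without that, nothing rules out a cycle $X^{\beta_1}\to X^{\beta_2}\to\dots\to X^{\beta_1}$ inside $J_m$ --- which is precisely what happens in Example \ref{es-8}, where the initial ideal is not strongly stable --- so as written the multiset argument has no content.

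The missing lemma, which is the heart of the paper's proof, is: if $X^\gamma\in\cN(J)$ and $X_iX^\gamma\in J$, then \emph{every} factorization $X_iX^\gamma=X_jX^{\beta'}$ with $X^{\beta'}\in J$ has $X_j<X_i$. (Indeed $X_j\neq X_i$ since $X^\gamma\notin J$; hence $X_i\mid X^{\beta'}$ and $X^\gamma=X_j\cdot(X^{\beta'}/X_i)$ is obtained from $X^{\beta'}\in J$ by the elementary move $X_i\mapsto X_j$, which must be a down move because $X^\gamma\notin J$ and $J$ is strongly stable.) Granting this, a measure that makes your Case 2 descent work is $\nu(X^\eta)=\max\{\,j:\ X_j\mid X^\eta,\ X^\eta/X_j\in J\,\}$ on $J_m\setminus B_J$, with the elements of $B_J$ placed strictly below everything else: a Case 2 step on $X^\beta=X_iX^\delta$ creates only monomials $X_iX^{\gamma'}$ that either lie in $B_J$ or satisfy $\nu(X_iX^{\gamma'})<i\le\nu(X^\beta)$, while a Case 1 step creates no $J_m$-monomials at all. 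The paper reaches the same conclusion by taking a degree-minimal non-terminating monomial and, among those, one whose split-off variable $X_i$ is minimal with respect to $\prec_B$; you should either adopt that formulation or supply the lemma and the measure above to close the gap.
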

\begin{proof} It is sufficient to prove that our assertion holds for the monomials. Let us consider the set $E$ of monomials having some $G^*$-reduction that does not terminate. If $E\neq \emptyset$ and $X^\beta in E$, then of course   $X^\beta \in J\setminus B_J$, because for every 
   $ X^\alpha \in B_J$,   the only possible    $G^*$-reduction is $X^\alpha \stackrel{G^*}\longrightarrow \sum c_{\alpha \gamma} X^\gamma$  which is complete by hypothesis. Thus $X^\beta=X_iX^{\delta}$ for some  $X^\delta  \in J$: we choose  $X^\beta$ so that its degree $m$ is the minimal in $E$   and that, among the monomials of degree $m$ in $E$,   $X_i$ is minimal with respect to $\prec_B$.  The first step of $G^*$-reduction of $X_iX^\delta  $ is $X_iX^\delta \stackrel{G^*}\longrightarrow X_i g_0$,  where the complete reduction $X^\delta \stackrel{G^*}\longrightarrow _+ g_0$  exists because $X^\delta$ has degree $m-1$. 
 Even if $g_0$ is a $J$-normal form,   $X_ig_0$  does not need to be. If  $X_ig_0$ contains a monomial    $X_iX^\gamma=X_jX^\beta$ for some $X^\beta \in J$, then $X_j\prec_B X_i$ because  $J$ is strongly stable. 
 By the minimality of $X_i$, every  sequence of $G^*$-reductions on monomials of $X_ig_0$ terminates. This is a contradiction and  so $E$ is empty.
\end{proof}

Using the $G^*$-reductions we can generalize   some properties of \Gr bases.

\begin{cor}\label{th1} Let  $J$ be a  strongly stable monomial ideal in $\PS$ with  monomial basis $B_J$ and let $I$  be a homogeneous ideal that contains  a $J$-set $G$ as in (\ref{eq5}). Then:
 \begin{itemize}
\item[i)] every polynomial $h$ in $\PS$ has a normal form modulo $I$ as a sum of monomials in $\cN(J)$     that can be obtained through a $G^*$-reduction.
\item[ii)]  $\cN (J)$ generates  $\PS/I$ as a $k$-vector space, so that $\dim_k I_m \geq \dim_k J_m$ for every $m\geq 0$. 
\end{itemize}
\end{cor}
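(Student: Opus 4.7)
The plan is to combine Theorem \ref{G*termina} (every $G^*$-reduction terminates in a $J$-reduced polynomial) with the observation that each step of a $G^*$-reduction changes the current polynomial only by an element of $(G)\subseteq I$. With these two ingredients, (i) is immediate and (ii) is a dimension count.

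For (i), given $h\in\PS$, Theorem \ref{G*termina} produces a finite $G^*$-reduction $h \stackrel{G^*}\longrightarrow h_0$ whose endpoint $h_0$ is a $k$-linear combination of monomials of $\cN(J)$. I would prove by induction on the degree $m$ of $h$ that $h - h_0 \in I$, mirroring the recursive structure of Definition \ref{stella}. A step of the first type rewrites a monomial $X^\alpha\in B_J$ appearing in the current polynomial as $\sum c_{\alpha\gamma}X^\gamma$, subtracting $f_\alpha\in G\subseteq I$. A step of the second type replaces a monomial $X_iX^\delta$ (with $X^\delta\in J_{m-1}$) by $X_i g_0$, where by the inductive hypothesis applied to the degree $m-1$ polynomial $X^\delta$ we already know $X^\delta-g_0\in I$, so the modification $X_i(X^\delta-g_0)$ lies in $I$. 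Hence $h-h_0\in I$, and $h_0$ is a normal form of $h$ modulo $I$ supported on $\cN(J)$. Homogeneity is preserved at every step because each $f_\alpha$ is homogeneous of degree $|\alpha|$, so the normal form of a homogeneous polynomial of degree $m$ is itself homogeneous of degree $m$.

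Part (ii) follows from (i): the normal form construction gives $\PS = I + \langle \cN(J)\rangle$ as $k$-vector spaces, which is exactly the statement that $\cN(J)$ spans $\PS/I$. Restricting to degree $m$ (legitimate since both summands are graded and $J$-normal forms preserve degree), we obtain $\PS_m = I_m + \langle \cN(J)_m\rangle$, hence
\[\dim_k \PS_m \;\leq\; \dim_k I_m + |\cN(J)_m|.\]
Since $J$ is a monomial ideal, $\dim_k \PS_m = \dim_k J_m + |\cN(J)_m|$, and cancellation yields the asserted inequality $\dim_k J_m \leq \dim_k I_m$.

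The only mildly delicate point is arranging the induction in (i) so that, at the moment one expands $X_iX^\delta$ as $X_ig_0$, the relation $X^\delta-g_0\in I$ is already available. This is exactly the content of Definition \ref{stella}, so I anticipate no real obstacle beyond this bookkeeping.
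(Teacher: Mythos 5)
Your proposal is correct and follows essentially the same route as the paper: both deduce (i) from Theorem \ref{G*termina} together with the observation that every reduction step modifies the polynomial by a monomial multiple of some $f_\alpha\in G\subseteq I$ (your induction on degree just makes explicit why the composite second-type step $X_iX^\delta\mapsto X_ig_0$ also lies in $I$), and both obtain (ii) from (i) by the graded dimension count $\PS_m=I_m+\langle\cN(J)_m\rangle$. No gaps.
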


\begin{proof}   The first item is a straightforward consequence of Theorem \ref{G*termina}: in fact at every step of $G$-reduction, and so especially of $G^*$-reduction, we add a linear combination of polynomials $f_\alpha \in G\subset I$. Then for every $h\in \PS$, we can find $h \stackrel{G^*}\longrightarrow_+ h_0$, where $h_0$ is a $J$-normal form and $h-h_0\in I$.
 
 The second item is an obvious consequence of the first one.
\end{proof}

The following result, which is again a direct consequence of the existence of a Noetherian reduction,  clearly highlights the analogy  between $J$-bases and \Gr or border bases.

\begin{cor}\label{cor1} Let  $J$ be a  strongly stable monomial ideal in $\PS$  and let $I$  be a homogeneous ideal that contains  a $J$-set $G$.  The following are equivalent:
\begin{enumerate}
	\item $I\in \BSt(J)$;
	\item $G$ is a $J$-basis;
	\item $\cN(J)$ is free in $\PS /I$;
	\item $I$ and $J$ share the same Hilbert function, i.d. $\forall m$:  $\dim_k I_m=\dim_k J_m$;
		\item  $\forall m$:  $\dim_k I_m\leq \dim_k J_m$;
	\item $\forall h\in I$:    $h \stackrel{G^*}\longrightarrow_+ 0$;
	\item  if $h_0$ is  any  $J$-normal form  modulo $I$ of a polynomial $h\in \PS$, then   $h \stackrel{G^*}\longrightarrow_+ h_0$;

	\item 	 if $h_0$ is  any $J$-normal form  modulo $I$ of a monomial $X^\beta \in\PS$, then   $X^\beta \stackrel{G^*}\longrightarrow_+ h_0$;
\end{enumerate}
\end{cor}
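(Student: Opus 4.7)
My plan is to organize the eight conditions into three clusters and verify the implications as a web. Conditions (1), (2), (3) are essentially equivalent by unwinding definitions: $\BSt(J)$ was defined as the set of homogeneous $I$ with $\cN(J)$ a $k$-basis of $\PS/I$, and $G$ being a $J$-basis asks the same of the ideal it generates; in both cases the nontrivial content is freeness of $\cN(J)$ in the quotient, since Corollary \ref{th1}(ii) already guarantees that $\cN(J)$ spans $\PS/I$. For the equivalence of (3), (4), (5), I would use the homogeneity of $I$ and $J$ to write $\dim_k(\PS/I)_m = \dim_k\PS_m - \dim_k I_m$ and the analogous formula for $J$; Corollary \ref{th1}(ii) then forces $\dim_k I_m \geq \dim_k J_m$, and equality in every degree is equivalent to the spanning set $\cN(J)_m$ being a basis of $(\PS/I)_m$. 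Thus (3), (4), (5) all distill to the same linear-algebraic fact.

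The real content is linking this cluster to the reduction conditions (6), (7), (8), whose backbone is Theorem \ref{G*termina}: every polynomial admits a complete $G^*$-reduction terminating at a $J$-normal form, and each reduction step adds an element of $(G) \subseteq I$ (since the move replacing $X^\beta = X_iX^\delta$ by $X_ig_0$ subtracts $X_i(X^\delta - g_0) \in (G)$). I would first prove (3) $\Rightarrow$ (7): given $h\in\PS$, a complete $G^*$-reduction yields a $J$-normal form $h_1$ with $h - h_1 \in I$; under (3) the $J$-normal form modulo $I$ is unique, so $h_1 = h_0$. Trivially (7) $\Rightarrow$ (8), and (3) $\Rightarrow$ (6) follows by the same argument applied to $h \in I$, whose unique $J$-normal form must be $0$.

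The loop closes by proving (6) $\Rightarrow$ (3) and (8) $\Rightarrow$ (3) in contrapositive form. If $\cN(J)$ fails to be free in $\PS/I$, there is a nonzero $g = \sum d_\gamma X^\gamma \in I$ with each $X^\gamma \in \cN(J)$. Since $g$ has no monomial in $J$, it is already $G^*$-reduced, so the only complete $G^*$-reduction of $g$ is the trivial one, terminating at $g \neq 0$; this contradicts (6). For (8), I would pick $\gamma_0$ with $d_{\gamma_0} \neq 0$ and observe that $X^{\gamma_0}$ and $X^{\gamma_0} - d_{\gamma_0}^{-1} g$ are two distinct $J$-normal forms modulo $I$ of the monomial $X^{\gamma_0}$; but $X^{\gamma_0} \in \cN(J)$ is already $G^*$-reduced, so the only reachable normal form is $X^{\gamma_0}$ itself, making the second form unreachable and contradicting (8). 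I do not foresee a serious obstacle, since Theorem \ref{G*termina} and Corollary \ref{th1} absorb the heavy lifting; the one small maneuver worth pointing out is extracting two distinct normal forms of a single monomial from any failure of freeness, which becomes transparent once one focuses on an individual $\cN(J)$-monomial appearing in a would-be linear relation.
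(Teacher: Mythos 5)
Your proof is correct and follows essentially the same route as the paper: the same clustering of (1)--(5) via Corollary \ref{th1}(ii) and of (6)--(8) via Theorem \ref{G*termina}, with the reduction conditions tied back to freeness through the observation that a polynomial supported on $\cN(J)$ admits no reduction step. The only (harmless) difference is that you close the web with direct implications (6) $\Rightarrow$ (3) and (8) $\Rightarrow$ (3), whereas the paper links (6), (7), (8) to one another first and then proves (3) $\Leftrightarrow$ (6).
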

\begin{proof} (1), (2) and (3) are equivalent by the definition itself of $J$-basis. (4) and (5) are   equivalent to the previous ones by Corollary \ref{th1} ii).  For (6) $\Leftrightarrow$ (7) $\Leftrightarrow$  (8) it is sufficient to observe that if $h_0$ is  a sum of monomials in $\cN(J)$, then $h \stackrel{G^*}\longrightarrow_+ 0$ if and only if  $h-h_0 \stackrel{G^*}\longrightarrow_+ 0$.

 Finally, to prove  (3)$\Leftrightarrow$ (6) we observe that (3) says that the only normal form of an element of $ I$ is $0$ and that, by Corollary \ref{th1} i), there is a $G^*$-reduction from every polynomial to one of its normal forms.
\end{proof}
\section{Detecting $J$-bases}\label{Buch}

In the  following $B_J$   will always denote the monomial basis of    a strongly stable monomial ideal $J$ and $I$ the ideal generated by a set of marked polynomials $G$ as in (\ref{eq5}), i.e. generated by a $J$-set.

 Now we will complete the analogy with the case of \Gr bases, showing that one can check if   $I$   belongs to $\BSt(J)$ using a Buchberger-like criterion that is looking at   $G$-reductions of $S$-polynomials. Mimicking the terminology of the \Gr bases,  we will call   \emph{$S$-polynomial} of  $f_\alpha, f_{\alpha'} \in G$    the polynomial $S(f_\alpha, f_{\alpha'}) =X^{\beta}f_{\alpha}-X^{\beta'}f_{\alpha'}$, where $X^{\beta+\alpha}=X^{\beta'+\alpha'}=LCM(X^{\alpha},X^{\alpha'})$.  
 
 We will reduce the $S$-polynomials using  an algorithm of $G$-reduction, denoted by  $G^{**}$ because it is in fact a refinement of the $G^*$-reduction and whose  definition requires some preliminaries.

In every degree $m$ the vector space  $I_m$ is generated by the set of polynomials $W_m=\{X^\delta f_\alpha / \ X^{\delta+\alpha}\in J_m, \ X^\alpha \in B_J\}$. $W_m$ becomes  a set of marked polynomials and also a partially ordered set in the following  way.

\begin{defin}\label{minimali}  The \emph{initial monomial} or \emph{head } of  $X^\delta f_\alpha$ is $\In(X^\delta f_\alpha)= X^{\delta+\alpha}$   and its \emph{tail} is the difference $X^{\delta+\alpha}-X^\delta f_\alpha=\sum c_{\alpha \gamma}X^{\delta +\gamma}$.   If $f_\alpha, f_{\alpha'}\in W_m$:
\begin{equation}\label{ordine} X^\delta f_\alpha > X^{\delta'} f_{\alpha'} \Longleftrightarrow \hbox{ the first non-zero entry in } \delta' -\delta  \hbox{  is positive}.\end{equation}
\end{defin}

\begin{lemma}\label{lemma1} Let $X^\beta$ be any monomial in $J_m$ and let  $W_{\beta}$ be the subset of   $W_m$ containing   all the monomials with   head   $X^\beta$.

Then $W_\beta$ is totally ordered by $<$  and its minimum is the only element $X^\delta f_\alpha\in W_\beta$   such that either $X^\delta=1$ or $\max(X^\delta) \leq \min(X^\alpha)$.
\end{lemma}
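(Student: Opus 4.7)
The plan is to establish three things in turn: that $<$ totally orders $W_\beta$, that the minimum satisfies the stated decomposition, and that it is the \emph{only} element of $W_\beta$ with that property.

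First, every element of $W_\beta$ is determined by its tail exponent $\delta = \beta - \alpha$ with $X^\alpha \in B_J$ dividing $X^\beta$. Since there are only finitely many such divisors and distinct choices yield distinct $\delta$'s, the relation (\ref{ordine})---which compares any two distinct $\delta,\delta'$ by the sign of the first nonzero coordinate of $\delta'-\delta$---is a total order on the finite set $W_\beta$. A unique minimum $X^\delta f_\alpha$ therefore exists.

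The core step is to verify that this minimum meets the condition, and I would argue by contradiction. Suppose $X^\delta \neq 1$ and $X_s := \max(X^\delta) > X_r := \min(X^\alpha)$. Set $X^{\alpha'} := X^\alpha \cdot X_s/X_r$: this is a monomial since $X_r \mid X^\alpha$, and it divides $X^\alpha X_s \mid X^\alpha X^\delta = X^\beta$. As the replacement of $X_r$ by $X_s$ (with $s > r$) is an up elementary move, strong stability of $J$ forces $X^{\alpha'} \in J$. Pick any $X^{\alpha''} \in B_J$ with $X^{\alpha''}\mid X^{\alpha'}$ and set $X^{\delta''} := X^\beta/X^{\alpha''}$, producing an element $X^{\delta''} f_{\alpha''} \in W_\beta$. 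Setting $\delta' := \beta - \alpha' = \delta + e_r - e_s$ (with $e_i$ the $i$-th standard basis vector), the componentwise inequality $\alpha'' \le \alpha'$ coming from divisibility gives $\delta'' \ge \delta'$ componentwise. Thus $(\delta'' - \delta)_i \ge (\delta' - \delta)_i$, which vanishes for $i<r$ and equals $+1$ at $i=r$; in particular $(\delta''-\delta)_r \ge 1$. The first nonzero entry of $\delta''-\delta$ therefore lies at some index $\le r$ and is strictly positive, so by (\ref{ordine}) one has $X^{\delta''} f_{\alpha''} < X^\delta f_\alpha$, contradicting minimality.

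For uniqueness, any $(X^\delta, X^\alpha) \in W_\beta$ satisfying the stated condition forces $X^\alpha = X_{r}^{a}\,X_{r+1}^{\beta_{r+1}} \cdots X_n^{\beta_n}$ with $X_r = \min(X^\alpha)$ and $a \ge 1$. Given two such candidates $X^{\alpha_1},X^{\alpha_2} \in B_J$, a direct exponent comparison shows that one divides the other (strictly if they differ), contradicting that both lie in $B_J$. The main obstacle in the whole argument is the exchange step of the previous paragraph: one must check both that $X^{\alpha'}$ really divides $X^\beta$, so the replacement produces a genuine element of $W_\beta$, and that passing from $\alpha'$ to a minimal generator $\alpha''$ dividing it cannot destroy the positive shift at coordinate $r$---this is exactly what the componentwise comparison $\delta'' \ge \delta'$ accomplishes.
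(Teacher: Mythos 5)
Your proof is correct and follows essentially the same route as the paper's: existence of the minimum from finiteness and totality of the lexicographic comparison on the $\delta$'s, and then the same exchange argument (replace $X_r=\min(X^\alpha)$ by $X_s=\max(X^\delta)$, use strong stability, descend to a minimal generator, and track the first nonzero entry of the difference of exponents). The only difference is that you spell out the uniqueness of the element with $\max(X^\delta)\leq\min(X^\alpha)$ via the divisibility comparison of candidates in $B_J$, a point the paper leaves implicit.
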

\begin{proof}   For the first assertion it is sufficient to observe that for every two different elements $X^\delta f_\alpha$, and $ X^{\delta'} f_{\alpha'}$ in $ W_\beta$ the equality  $X^\beta=X^{\alpha+\delta}=X^{\alpha'+\delta'}$   implies $\delta-\delta'\neq 0$. In fact if $\delta=\delta'$, then   $X^{\alpha}=X^{\alpha'}$, hence by definition of $G$ also  $f_{\alpha}=f_{\alpha'}$. Thus, $W_\beta$ has minimum because it is a finite, totally ordered set.

If $X^\delta=1$, namely if $X^\beta=X^\alpha \in J_m$, then    $f_\beta$  is the only element in $W_\beta$ because $B_J$ is a monomial basis.
 So assume that $X^\delta \neq 1$ and let $X^\delta f_\alpha $ be an element in $ W_\beta$   such that $X_i=\max (X^\delta) > X_j=\min(X^\alpha )$. Now we verify  that  $X^\delta f_\alpha $ is not the minimum of $ W_\beta$. Since $J$ is Borel fixed, then  $X_iX^\alpha/X_j \in J_m$. Hence $W_\beta$ also contains  $X^{\delta'+\eta}f_{\alpha'}$  where $X^{\delta'}=X_jX^\delta/X_i$ and $X_iX^\alpha/X_i=X^\eta X^{\alpha'}$. The first non-zero entry of $\delta'+\eta-\delta$ is  positive because the same holds for   $\delta'-\delta$.
\end{proof}  

Thanks to the previous lemma we can consider for every $m$ 	 the subset $V_m\subseteq W_m$ of the polynomials that are minimal with respect to $<$: note that  $V_m$ has $\dim_k (J_m)$ elements.

 \begin{defin}\label{stellastella} The $G^{**}$-reduction of a homogeneous polynomial $h$ of degree $m$ is a $G$-reduction that only uses the polynomials of $V_m$, that is in which at every step we rewrite a monomial of $ J_m$ by means of  a  polynomial of $ V_m$. 
 \end{defin}
 
  Note that when $X^\delta \neq 1$,  the monomials in the tail of $X^\delta f_\alpha$ do  not need to belong to  $\cN(J)$. 
 \begin{lemma}\label{ridmonomi} In the above hypotheses and notation:
 \begin{itemize}\item[a)]  if  $ X^\delta f_\alpha \in W_m$, $X^{\delta'} f_{\alpha'}\in V_m$  and   $X^{\delta'+\alpha'}$ appears   in the tail of $X^\delta f_\alpha$, then $ X^\delta f_\alpha >X^{\delta'} f_{\alpha'}$.

\item[b)]  if $X^{\eta_0},X^{\eta_1} \dots, X^{\eta_s}$ is a sequence of monomials in $J_m$ such that $X^{\eta_{i+1}}$ appears in the tail of the polynomial $g_{\eta_i}=\min(W_{\eta_i})$,
then   the monomials $X^{\eta_i}$ are all distinct;

\item[c)] every polynomial $h$ has a $G^{**}$-reduction, unless   it is   a sum of monomials in $\cN(J)$, hence  $h \stackrel{G^{**}}\longrightarrow_+ h_0$ only if $h_0$ is a $J$-normal form modulo $I$ of $h$;

\item[d)] the $G^{**}$-reduction is Noetherian.
\end{itemize}
\end{lemma}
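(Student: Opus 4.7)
The plan is to prove (a) first, as (b)--(d) all follow from it by combinatorial and rewriting-system reasoning.

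For (a), the hypothesis lets us write $X^{\delta'+\alpha'}=X^{\delta+\gamma}$ for some monomial $X^\gamma\in\cN(J)$ appearing in the tail of $f_\alpha$; hence $\delta'-\delta=\gamma-\alpha'$, and it suffices to show that the first nonzero entry of $\gamma-\alpha'$ is positive in the sense of Definition~\ref{minimali}. I would argue by contradiction: if $k$ is the smallest index at which $\gamma$ and $\alpha'$ differ and $\gamma_k<\alpha'_k$, then $\delta_k>\delta'_k$, forcing $X_k\mid X^\delta$. By Lemma~\ref{lemma1}, $X^{\delta'}f_{\alpha'}\in V_m$ sits in one of two regimes: either $X^{\delta'}=1$ (so $X^{\alpha'}=X^\delta X^\gamma\in B_J$, and one exploits the minimality of $X^{\alpha'}$ as a generator), or $\max(X^{\delta'})\le\min(X^{\alpha'})$. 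In each regime I would combine this information with the strong stability of $J$ to manufacture from $X^\gamma$ a Borel up-move whose target is forced to lie in $J$, contradicting $X^\gamma\in\cN(J)$; the particular regime pins down where the index $k$ sits relative to the supports of $X^{\delta'}$ and $X^{\alpha'}$ so that the needed up-move is available.

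Parts (b)--(d) then follow by standard rewriting arguments. For (b), iterating (a) yields a strictly descending chain $g_{\eta_0}>g_{\eta_1}>\cdots>g_{\eta_s}$ in the strict partial order of Definition~\ref{minimali}; antisymmetry and transitivity force the $g_{\eta_i}$, and hence the $X^{\eta_i}$, to be pairwise distinct. For (c), if $h$ is not already a sum of $\cN(J)$-monomials then some $X^\mu\in J_m$ has nonzero coefficient in $h$, and $g_\mu=\min(W_\mu)\in V_m$ provides a valid $G^{**}$-reduction step; since each step subtracts a multiple of $g_\mu\in I$, residues modulo $I$ are preserved, so any complete $G^{**}$-reduction $h\stackrel{G^{**}}\longrightarrow_+ h_0$ yields a $\cN(J)$-sum congruent to $h$ modulo $I$, i.e.\ a $J$-normal form. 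For (d), to each degree-$m$ polynomial associate the multiset of its $J_m$-monomials, ordered via $\eta\mapsto g_\eta$ by the partial order inherited from $V_m$; by (a), a $G^{**}$-step removes one $X^\mu$ and reinserts only monomials from $\cN(J)_m$ or monomials strictly smaller than $X^\mu$ in the inherited order. Since $J_m$ is finite, this order is trivially well-founded, and the associated Dershowitz--Manna multiset order is well-founded as well, so the multiset strictly decreases at every step and termination follows.

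The main obstacle is (a), specifically the construction of the forbidden Borel up-move in each of the two regimes of Lemma~\ref{lemma1}. The degenerate subcase $X^{\delta'}=1$ is the most delicate: there the combinatorial leverage is not a clean $\max$--$\min$ bound but only the minimality of $X^{\alpha'}$ in $B_J$, which must be played against the divisibility $X^\delta\mid X^{\alpha'}$ forced by $X^\delta X^\gamma=X^{\alpha'}$ in order to locate a specific generator move whose output is trapped in $J$.
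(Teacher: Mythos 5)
Your handling of (b), (c) and (d) is fine and is essentially the paper's own argument: (b) is strict descent from (a) plus injectivity of $\eta\mapsto g_\eta=\min(W_\eta)$, (c) is the observation that every $X^\eta\in J_m$ is the head of exactly one element of $V_m$, and your multiset (Dershowitz--Manna) formulation of (d) is a cleaner version of the paper's counting argument. The entire content of the lemma therefore sits in (a), and there your proposal is not a proof: the step you yourself label ``the main obstacle'' is never carried out, and it cannot be carried out as described. First, the intended contradiction misuses strong stability: an \emph{up} elementary move starting at $X^\gamma$ and landing in $J$ is perfectly compatible with $X^\gamma\in\cN(J)$ (up moves from $J$ stay in $J$; nothing constrains up moves from $\cN(J)$); to contradict $X^\gamma\in\cN(J)$ you would need a \emph{down} move from $X^\gamma$ landing in $J$. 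Second, and decisively, no such move is available in the regimes you isolate. Push your own case analysis to the end: with $k$ the first index where $\gamma_k<\alpha'_k$, the condition $\max(X^{\delta'})\le\min(X^{\alpha'})$ (or $X^{\delta'}=1$) forces $\gamma_j=\alpha'_j$ for $j<\,$(that minimum's index) and $\gamma_j\le\alpha'_j$ everywhere, i.e.\ $X^\gamma$ is a \emph{proper divisor} of $X^{\alpha'}$. Every proper divisor of a minimal generator of $J$ automatically lies in $\cN(J)$, so the hoped-for contradiction is structurally unavailable whenever $\deg X^{\alpha'}>\deg X^\gamma$. (When the degrees are equal, properness forces $\gamma=\alpha'$, which is absurd, and your argument does close; this is why nothing goes wrong for ideals generated in a single degree.)

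The problematic configuration genuinely occurs, so the deferred step fails rather than being merely hard. Take $J=(x^2,xy,xz,y^3)\subset k[x,y,z]$ with $x>y>z$; this is strongly stable and $y^3\in B_J$. Then $yf_{x^2}\in W_3$, and if $c_{x^2,y^2}\neq 0$ the monomial $y^3=X^{\delta'+\alpha'}$ with $X^{\delta'}f_{\alpha'}=1\cdot f_{y^3}=\min(W_{y^3})\in V_3$ appears in its tail; here $\delta'-\delta=-\delta$ has all entries $\le 0$, so its first non-zero entry is negative and $yf_{x^2}>f_{y^3}$ is false for the order of Definition \ref{minimali}. Thus your reformulation of (a) as ``the first non-zero entry of $\gamma-\alpha'$ is positive'' is the correct translation, but that statement is false in general, and no Borel-move argument will prove it; the order (\ref{ordine}) has to be replaced by a different well-founded order (for instance one that tracks $\max(X^\delta)$, as in later treatments of marked bases) before (b)--(d) can be rerun. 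I will add, in fairness, that the paper's own proof of (a) is a one-line appeal to Lemma \ref{lemma1} that glosses over exactly this point, so you have located the real difficulty accurately; but a proposal that reduces the whole lemma to this single step and then leaves it open, with a plan that cannot succeed, has a genuine gap.
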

\begin{proof}
  a) Every monomial in the tail of $X^\delta f_\alpha$ is a multiple of $X^\delta$, and so especially   $X^{\delta'+\alpha'}=X^{\delta +\gamma} $ for some $X^\gamma \in \cN(J)$. By Lemma \ref{lemma1},  the first non-zero entry  in $\delta' -\delta$ is positive and so $X^{\delta'} f_{\alpha'} < X^{\delta} f_{\alpha}$.

\medskip

 b) As a consequence of  a) we have $g_{\eta_1}> g_{\eta_2} >\dots >g_{\eta_s}$ so that the $g_{\eta_i}$ are all distinct. Thus also their heads  $X^{\eta_1} $  are distinct because each  $g_{\eta_i}$ is the minimum in $W_{\eta_i}$. 

 For c)  it is sufficient to observe that, by construction, every monomial $X^\eta$ in $J_m$ is the head of one and only one polynomial $g_\eta$ in $V_m$ and so $X^\eta \stackrel{G^{**}}\longrightarrow X^\eta-g_\eta$.
 
  d) is a consequence of c) and b). In fact    the length of every sequence of monomials as in b) is bounded by the number  of monomials of degree $m$, while from a  sequence of  infinitely many steps of $G^{**}$-reduction   we could obtain  sequences of monomials as in b) as long as we want.
\end{proof}

Now we are able to prove the  \emph{Buchberger-like criterion for $J$-bases}.

\begin{theorem}\label{riduzione}  Let  $J$ be a  strongly stable monomial ideal in $\PS$ with  monomial basis $B_J$ and let $I$ be a homogeneous ideal generated by a $J$-set $G$ as in (\ref{eq5}). Then:
$$I\in \BSt(J)  \Longleftrightarrow  \forall    f_\alpha, f_{\alpha'}\in G:  \  S(f_\alpha, f_{\alpha'})\stackrel{G^{**}}\longrightarrow_+ 0.$$
\end{theorem}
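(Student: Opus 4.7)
The forward implication is direct: if $I\in\BSt(J)$, then by Corollary \ref{cor1}(3) we have $I\cap\langle\cN(J)\rangle=0$. Each $S(f_\alpha,f_{\alpha'})$ lies in $I$, and Lemma \ref{ridmonomi}(c,d) guarantees that any $G^{**}$-reduction of it terminates in finitely many steps at a $J$-normal form $h_0$; since every reduction step only subtracts an element of $\operatorname{span}(V_m)\subseteq I$, we have $h_0\in I\cap\langle\cN(J)\rangle=0$.

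For the reverse implication, assume every $S$-polynomial $G^{**}$-reduces to $0$. By Corollary \ref{cor1}(5), it suffices to show $\dim_k I_m\leq\dim_k J_m=|V_m|$ for each $m$, and since $W_m$ spans $I_m$ as a $k$-vector space, this reduces to proving that every $X^\delta f_\alpha\in W_m$ lies in $\operatorname{span}(V_m)$. The driving identity is the following: for $X^\delta f_\alpha\notin V_m$, setting $g_\beta=X^{\delta''}f_{\alpha''}:=\min(W_{\delta+\alpha})\in V_m$ (Lemma \ref{lemma1}), $X^\tau:=\operatorname{lcm}(X^\alpha,X^{\alpha''})$, and $X^\eta:=X^{\delta+\alpha}/X^\tau$, a direct computation using $X^\delta=X^\eta X^{\tau-\alpha}$ and $X^{\delta''}=X^\eta X^{\tau-\alpha''}$ yields
\[
X^\delta f_\alpha - g_\beta \;=\; X^\eta\, S(f_\alpha,f_{\alpha''}).
\]
Since $S(f_\alpha,f_{\alpha''})\stackrel{G^{**}}\longrightarrow_+0$ by hypothesis, we can write $S(f_\alpha,f_{\alpha''})=\sum_i\mu_i g_i$ with $g_i\in V_{\deg\tau}$, and hence
\[
X^\delta f_\alpha \;=\; g_\beta + \sum_i \mu_i\, X^\eta g_i,
\]
expressing $X^\delta f_\alpha$ as $g_\beta\in V_m$ plus a $k$-linear combination of further elements $X^\eta g_i\in W_m$.

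To close the argument I would do a Noetherian induction showing each $X^\eta g_i$ above is itself in $\operatorname{span}(V_m)$. My strategy is to induct on the partial order of Definition \ref{minimali} (which depends only on the exponent, hence is preserved under multiplication by $X^\eta$), using Lemma \ref{ridmonomi}(a,b) as the main tool: part (a) forces a strict decrease in the order at each $G^{**}$-reduction step producing the $g_i$'s from $S(f_\alpha,f_{\alpha''})$, while (b) bounds the length of the chain of heads traversed, yielding a well-founded induction on the finite set $W_m$. The main obstacle I expect is that part (a) applies in degree $\deg\tau$ rather than in degree $m$, so one has to transport the strict inequalities back to $W_m$ after multiplication by $X^\eta$; the compatibility of the order with this multiplication should make the transport possible, but formalising it carefully — especially keeping track of the fact that $g_i$'s used in the reduction of $S(f_\alpha,f_{\alpha''})$ may cross several distinct $W_{\beta'}$'s before the process terminates — will be the most delicate point of the proof.
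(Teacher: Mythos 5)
Your proposal is correct and follows essentially the same route as the paper's proof: the same forward argument via uniqueness of the $J$-normal form of elements of $I$, the same key identity $X^\delta f_\alpha-\min(W_{\delta+\alpha})=X^\eta\, S(f_\alpha,f_{\alpha''})$ (the paper merely treats the cases $X^\eta=1$ and $X^\eta\neq 1$ separately), and the same Noetherian induction on the order of Definition \ref{minimali} driven by Lemma \ref{ridmonomi}(a). The ``delicate point'' you flag is resolved exactly as you anticipate: the order compares only the exponents of the multipliers, so strict inequalities in degree $\deg\tau$ transport to $W_m$ under multiplication by $X^\eta$.
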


\begin{proof}   
Due to Lemma \ref{ridmonomi},  every polynomial $h$ has a complete $G^{**}$-reduction  to a $J$-normal form modulo $I$.  Moreover, if $I\in \BSt(J)$ the normal form is unique (Theorem \ref{th1}) and the unique normal form of every polynomial in $I$, as the  $S$-polynomials  are,   is 0.

 For the converse,  we use \lq\lq(1) $\Leftrightarrow$ (5)\rq\rq\ in Corollary \ref{cor1}  and prove that, for every $m$, the $k$-vector space $I_m$ is generated by the $\dim_k (J_m)$ elements of $V_m$. More precisely we will show that     every  polynomial $X^\delta f_\alpha \in W_m$ either   belongs to $V_m$ or   is   a linear combination of  elements of $V_m$ lower than $X^\delta f_\alpha$ itself. We may assume  that the same holds for every polynomial in $W_m$ lower than $X^\delta f_\alpha$.
 
  If  $X^\delta f_\alpha \in V_m$ there is nothing to prove. 
  On the other hand, let $X^{\delta'}f_{\alpha'}=\min(W_{\delta+\alpha})\in V_m$ and consider $X^{\delta}f_{\alpha}-X^{\delta'}f_{\alpha'}$.  
 If this difference is the $S$-polynomial $S( f_{\alpha}, f_{\alpha'})$, by the hypothesis it has a complete  $G^{**}$-reduced to 0,  hence it is a $k$-linear combination $\sum z_ig_{\eta_i}$ of polynomials   $g_{\eta_i}V_m$ with constant coefficients $z_i\in k$.   Thanks to Lemma \ref{ridmonomi} a), we have for each $i$ either  $X^{\delta}f_{\alpha}>g_{\eta_i}$  or  $X^{\delta'}f_{\alpha'}> g_{\eta_i}$ and so again $X^{\delta}f_{\alpha}>g_{\eta_i}$ as $ X^{\delta'}f_{\alpha'}\in V_m$. Thus $X^{\delta} f_{\alpha}$ is a linear combination $ X^{\delta'}f_{\alpha'}+\sum z_ig_{\eta_i}$ of elements in $V_m$ lower than it.
 
If $X^{\delta}f_{\alpha}-X^{\delta'}f_{\alpha'}=X^\beta S( f_{\alpha}, f_{\alpha'})= X^\beta (X^{\eta}f_{\alpha}-X^{\eta'}f_{\alpha'})$ for some $X^\beta \neq 1$, we can apply the previous case to $S( f_{\alpha}, f_{\alpha'})$ and say that $X^{\eta}f_{\alpha}$ is a sum $X^{\eta'}f_{\alpha'}  +\sum z_ig_{\eta_i}$ of polynomials in $V_{m-\vert \beta \vert}$ lower than $X^{\eta}f_{\alpha}$. Hence $X^{\delta}f_{\alpha}=X^{\beta+\eta'}f_{\alpha'}  +\sum z_i X^{\beta'}g_{\eta_i}$. The polynomials $X^{\beta+\eta'}f_{\alpha'}$ and $  X^{\beta'}g_{\eta_i}$ appearing in the right hand are lower that  $X^{\delta}f_{\alpha}$. So we can apply to them the inductive hypothesis and say that either they are elements of $V_m$ or they are  linear combinations of lower elements in $V_m$. This allows us to conclude.
\end{proof}  

\begin{remark}\label{solom0} In the proof of \lq\lq$\Longleftarrow$\rq\rq\ of the previous theorem we do not use in fact the whole hypothesis, that is the existence of a $G^{**}$-reduction for all the $S$-polynomials $X^\delta f_{\alpha}-X^{\delta'} f_{\alpha'}$ of elements in $G$, but only   for those such that either $ f_{\alpha}$ or $ f_{\alpha'}$ belongs to some $V_m$.
In the case of \Gr basis, this  property  is analogous  to the improved Buchberger algorithm that only considers $S$-polynomials corresponding to a set of generators for the syzygies of $J$.

Thus we can improve Corollary \ref{cor1} and say that, in the same hypotheses: 
$$
I \in BSt(J) \Longleftrightarrow  \forall m\leq m_0: \  \dim_k I_m=\dim_k J_m \Longleftrightarrow  \forall m\leq m_0: \  \dim_k I_m\leq \dim_k J_m
$$
where $m_0$ is the maximal degree of $S$-polynomials of the above special type. 

Moreover,   to prove that $\dim_k I_m=\dim_k J_m $ for some $m$ it is sufficient to assume that there exists a $G^{**}$-reduction to 0 of the $S$-polynomials of degree $\leq m$.
\end{remark}

\begin{example} Let $J=(x^2,xy,xz,y^2)\subset k[x,y,z]$, where $x>y>z$ and consider a $J$-set $G=\{f_{x^2},f_{xy}, f_{xz}, f_{y^2}\}$. In order to check whether $G$ is a $J$-basis it is sufficient to verify if $S(f_{x^2},f_{xy})$, $S(f_{x^2},f_{xz})$, $S(f_{x^2},f_{y^2})$, $S(f_{xy},f_{xz})$ and $S(f_{xy},f_{y^2})$ have  $G^{**}$-reductions to $0$, but it is not necessary to controll $S(f_{xz},f_{y^2})$ because the element in $V_3$ whose head is $xy^2z$ is $yzf_{xy}$.
\end{example}

\begin{example}\label{ripreso}  Let us consider   again, as in Example \ref{nuovo},  the  strongly stable ideals $J=(x^3,x^2y,xy^2,y^5)_{\geq 5}$   in $ k[x,y,z]$  and  the marked set $G=B_J\cup \{f\}\setminus \{xy^2z^2\}$, where $f=xy^2z^2-y^4z-x^2z^3$   and $\In(f)=xy^2z^2$. We have already proved that $G$ is not a \Gr basis with respect to any term ordering. However, it is a $J$-basis as we can verify using the Buchberger like criterion proved in Theorem \ref{riduzione}.  The $S$-polynomials non involving $f$ vanish and all the $S$-polynomials involving $f$ are multiple of either $x\cdot(y^4z+x^2z^3)$ or $y\cdot(y^4z+x^2z^3)$.    Since  $y^4z\cdot x$, $y^4z\cdot y$, $x^2z^3\cdot x$,  $ x^2z^3\cdot y$ belong to  $B_J\setminus\{xy^2z^2\}$ the $G^{**}$-reduction of all the $S$-polynomials  is  0. Notice that the  $G^{**}$-reduction of $x^2y^2z^3 $  is  0,  because $z^2\cdot x^2y^2z \in V_7$, while $xzf \notin V_7$. A different choice of $G$-reduction  gives the loop:
$$x^2y^2z^3 \stackrel{f}\longrightarrow xy^4z^2 +x^3z^4  \stackrel{ x^3z^2}\longrightarrow  xy^4z^2  \stackrel{f}\longrightarrow y^6z+x^2y^2z^3 \stackrel{ y^5}\longrightarrow x^2y^2z^3$$
\end{example}

 \section{The stratum of $J$ as an homogeneous affine scheme}\label{sec:local}
 
  In this last section, using the characterizations of $J$-bases obtained in the previous ones,  we finally prove that the family $\BSt(J)$ can be endowed  in a natural  way of a structure of  affine scheme and that it turns out to be     homogeneous with respect to a grading over $\ZZ^{n+1}$. Our construction generalizes that of the \Gr stratum $\St(J,\prec)$ given in \cite{CF}, \cite{NS}, \cite{RT}, \cite{FR} and \cite{LR}. Especially as in \cite{LR} we obtain equations giving the affine structure    in two   equivalent  ways, that is using either reductions (i.e. the Buchberger criterion)  or the rank of some matrices.  
  
  The main difference between the cases of $\St(J,\prec)$ and  $\BSt(J)$ is that in the first one $J$ is any monomial ideal, but we need to fix a term ordering, while in the second one we do not need any term ordering, but $J$ is assumed to be strongly stable. 
  
 Let us fix any strongly stable monomial ideal $J$ in $\PS=k[X_0, \dots, X_n]$ and let $B_J$ be its monomial basis.  Summarizing what proved in the first part of this paper, every ideal $I\in \BSt(J)$ has a unique set of generators of the type (\ref{eq5}), that is a  set of marked polynomials of the type:
 $$G=\{f_\alpha=X^\alpha - \sum c_{\alpha \gamma} X^\gamma  \ / \ \In(f_\alpha)=X^\alpha\in B_J, \ X^\gamma \in \cN(J)_{\vert \alpha \vert} \}.
 $$
 We  give an explicit description of  the family $\BSt(J)$ using this special set of generators. 
Let us consider a new variable $C_{\alpha \gamma}$ for every monomial $X^\alpha$ in  $ B_J$  and for every monomial $X^\gamma \in \cN (J)$ of the same degree as $X^\alpha$. We  denote by $C$ the set of these new variables and by $N$ their number. 

Moreover,  let us consider the set of marked polynomials :
\begin{equation}\label{generale} \mathcal{G}=\{F_\alpha=X^\alpha-\sum C_{\alpha \gamma} X^\gamma  \ / \ \In(F_\alpha)=X^\alpha\in B_J, \ X^\gamma \in \cN(J)_{\vert \alpha \vert}\}.
\end{equation}  
We can obtain the $J$-basis of every ideal $I\in \BSt(J)$   specializing (in a unique way) the variables $C$ in $k^N$, but not every specialization gives rise to an ideal in $\BSt(J)$. 
Thanks to Corollary \ref{cor1}, a specialization $C\mapsto c\in k^N$ transforms $\mathcal{G}$ in a $J$-basis  that generates an ideal   $I \in \BSt(J)$  if and only if   $\dim_k I_m \leq \dim_kJ_m$ for every $m\geq 0$.  This condition     realizes $\BSt(J)$ as an affine subscheme of $\Af^N$.

 We can then consider for each $m$ the matrix $A_m$ whose columns correspond to the degree $m$ monomials in $\PS=k[X]$ and whose rows contain the coefficients of those monomials  in every polynomial of the type $X^\beta F_\alpha$ such that  $\vert \beta +\alpha\vert =m$:  every entry in $A_m$ is either 0, 1 or  one of the variables $C$.

 We will denote by   $\mathfrak{A}$ the ideal of $k[C]$   generated by all the minors of  $A_m$ of order $\dim_k(J_m)+1$ for every $m\geq 0$.

 \medskip
 
 We can also consider the $S$-polynomials  $S(F_\alpha,F_{\alpha'})$ of elements in $\mathcal{G}$ and their $\mathcal{G}^{**}$-complete reductions $H_{\alpha,\alpha'}$. Then we collect $H_{\alpha,\alpha'}$   with respect to the variables $X$ and extract their   coefficients  that  are polynomials in $k[C]$. We will denote by $\mathfrak{R}$ the ideal in $k[C]$ generated by the set of these coefficients. Moreover we will also denote by $\mathfrak{R}'$ the ideal   obtained in this same way, but only considering $S$-polynomials  $S(F_\alpha,F_{\alpha'})=X^\delta F_\alpha-X^{\delta'}F_{\alpha'}$ such that $X^\delta F_\alpha$ is minimal among those with head $X^{\delta+\alpha}$.

\medskip

We   finally get  the main purpose of the paper, that is we  define   the algebraic structure of the stratum of $J$ and prove some general properties.

\begin{defin} In the above settings the stratum $\BSt(J)$ of $J$ is the subscheme of $\Af^N$ defined by the ideal $\mathfrak{A}$.
\end{defin}
\begin{theorem} \label{th2} Let  $J$ be a strongly stable ideal in $\PS$. In the above hypotheses and notation:
\begin{itemize} 
\item[1)]  there is an $1-1$ correspondence between the closed points of $\BSt(J)$and  the ideals $I\in \PS$ such that $\cN(J)$ is a  basis of the $k$-vector space $\PS/I$. One of the closed points of $\BSt(J)$ is the origin,  which   corresponds to the ideal $J$ itself. 
\item[2]  $\BSt (J)$ is  homogeneous with respect to a non-standard grading $\lambda$ of $k[C]$ over the group $\ZZ^{n+1}$ given by   $\lambda (C_{\alpha \gamma})=\alpha-\gamma$.
\item[3]  $\mathfrak{A}=\mathfrak{R}= \mathfrak{R}'$, hence $\BSt(J)$ can also be defined using  the  Buchberger like criterion about  $\mathcal{G}^{**}$-reductions of $S$-polynomials of elements in $\mathcal{G}$. 
\end{itemize}

\end{theorem}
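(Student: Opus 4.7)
The plan is to verify the three parts in order, with parts (1) and (2) being relatively direct and part (3) requiring the bulk of the technical work.

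For part (1), a closed point of $V(\mathfrak{A})\subset \Af^N$ is a specialization $c\in k^N$ of the $C$-variables, which turns $\mathcal{G}$ into a $J$-set $G_c$ generating an ideal $I_c$. Since the rows of $A_m$ span $(I_c)_m$ as a $k$-subspace of $\PS_m$, the point $c$ lies in $V(\mathfrak{A})$ if and only if $\dim_k (I_c)_m\leq \dim_k J_m$ for every $m$. By the equivalence (1)$\Leftrightarrow$(5) in Corollary~\ref{cor1}, this is exactly $I_c\in \BSt(J)$. Injectivity of $c\mapsto I_c$ follows from the uniqueness of the $J$-basis (Corollary~\ref{cor1}, (1)$\Leftrightarrow$(2)), and the origin $c=0$ gives $F_\alpha=X^\alpha$, i.e.\ $I_0=J$. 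For part (2), I would extend $\lambda$ to $\PS[C]$ by $\lambda(X_i)=e_i$ (the $i$-th standard basis vector of $\ZZ^{n+1}$) and $\lambda(C_{\alpha\gamma})=\alpha-\gamma$. Each $F_\alpha=X^\alpha-\sum C_{\alpha\gamma}X^\gamma$ is then $\lambda$-homogeneous of degree $\alpha$, since every term $C_{\alpha\gamma}X^\gamma$ has degree $(\alpha-\gamma)+\gamma=\alpha$. Consequently $X^\beta F_\alpha$ is $\lambda$-homogeneous of degree $\beta+\alpha$, and the entry of $A_m$ at row $X^\beta F_\alpha$, column $X^\mu$ is a $\lambda$-homogeneous element of $k[C]$ of degree $\beta+\alpha-\mu$. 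By the standard argument for such a row-column weighting, every term in the Leibniz expansion of a minor carries the same total $\lambda$-degree, so all $(\dim_k J_m+1)$-minors are $\lambda$-homogeneous and $\mathfrak{A}$ is a $\lambda$-homogeneous ideal.

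For part (3), the central observation is that the rows of $A_m$ corresponding to $V_m$ form a rank-$\dim_k J_m$ submatrix with leading $1$s in $\dim_k J_m$ distinct $J_m$-columns, and that iterated $\mathcal{G}^{**}$-reduction is precisely the process of using these rows to eliminate all $J_m$-column entries from any other row of $A_m$, terminating by Lemma~\ref{ridmonomi}~d). The inclusion $\mathfrak{R}'\subseteq \mathfrak{R}$ is immediate. For $\mathfrak{R}\subseteq \mathfrak{A}$: the $\mathcal{G}^{**}$-reduction of an $S$-polynomial records an identity $S(F_\alpha,F_{\alpha'})-H_{\alpha,\alpha'}=\sum \phi_i\, g_i$ with $g_i\in V_m$ and $\phi_i\in k[C]$, and a Schur-complement expansion then writes each coefficient of $H_{\alpha,\alpha'}$ as a $k[C]$-combination of $(\dim_k J_m+1)$-minors of $A_m$. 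For $\mathfrak{A}\subseteq \mathfrak{R}'$: reverse the Schur-complement identification, so that every $(\dim_k J_m+1)$-minor of $A_m$ is a $k[C]$-combination of the coefficients of the $\mathcal{G}^{**}$-reductions of the rows $X^\beta F_\alpha-g_{\beta+\alpha}$ with $X^\beta F_\alpha\notin V_m$; writing $g_{\beta+\alpha}=X^{\delta'}F_{\alpha'}$, this difference is a monomial multiple of the $S$-polynomial $S(F_\alpha,F_{\alpha'})$, which appears in $\mathfrak{R}'$ because $X^{\delta'}F_{\alpha'}$ is by construction minimal in its orbit (Lemma~\ref{lemma1}). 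Combining the three inclusions yields $\mathfrak{A}=\mathfrak{R}=\mathfrak{R}'$, and since $V(\mathfrak{R})$ is exactly the locus where all $S$-polynomials $\mathcal{G}^{**}$-reduce to $0$, this gives the desired Buchberger-like description of $\BSt(J)$.

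The main obstacle I foresee is making the Schur-complement identifications rigorous at the level of ideals, not merely at the level of their vanishing loci. One must track carefully which $k[C]$-linear combinations of rows of $A_m$ arise at each elimination step of the iterated $\mathcal{G}^{**}$-reduction, and relate these, by expanding minors along the $V_m$-block, to specific $(\dim_k J_m+1)$-minors of $A_m$. A further subtlety in the direction $\mathfrak{A}\subseteq \mathfrak{R}'$ is that the $S$-polynomial associated with $X^\beta F_\alpha-g_{\beta+\alpha}$ may live in a strictly lower degree than $m$, so one must verify that the connecting monomial factor is compatible with the $k[C]$-ideal matching rather than just with set-theoretic vanishing.
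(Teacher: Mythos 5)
Your proposal follows essentially the same route as the paper: part (1) via the equivalence (1)$\Leftrightarrow$(5) of Corollary~\ref{cor1} and uniqueness of the $J$-basis, part (2) via the row/column weight count showing every term of a minor of $A_m$ has $\lambda$-degree $\sum(\alpha_i+\beta_i)-\sum\delta_j$, and part (3) by viewing the $V_m$-rows as a unitriangular block on the $J_m$-columns and recognizing $\mathcal{G}^{**}$-reduction as Gaussian elimination against that block, so that the bordered $(\dim_k J_m+1)$-minors coincide with the coefficients of the reduced $S$-polynomials. If anything you are more explicit than the paper, which compresses the whole chain $\mathfrak{A}=\mathfrak{R}=\mathfrak{R}'$ into the single observation that the row-reduction of $A_m$ \emph{is} the $\mathcal{G}^{**}$-reduction of the special $S$-polynomials; the two subtleties you flag (carrying the Schur-complement identities at the level of ideals, and the degree shift coming from the monomial factor relating $X^\beta F_\alpha-g_{\beta+\alpha}$ to $S(F_\alpha,F_{\alpha'})$) are genuine but are left equally implicit in the paper's own proof.
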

\begin{proof}   
 1) is a consequence of what proved in the previous sections.

To prove that $\BSt(J)$ is $\lambda$-homogeneous it is sufficient to show that every minor of $A_m$ is $\lambda$-homogeneous. Let us  denote by $C_{\alpha \alpha}$ the coefficient ($=1$) of $X^\alpha$ in every polynomial $F_\alpha$: we can apply also to the \lq\lq symbol\rq\rq\ $C_{\alpha \alpha}$ the  definition of $\lambda$-degree of the variables $C_{\alpha \gamma}$,  because $\alpha-\alpha=0$ is indeed the $\lambda$-degree of the constant $1$. In this way, the  entry in the row $X^\beta F_\alpha$ and in the column $X^\delta$  is  $\pm C_{\alpha \gamma}$ if  $X^{\delta}=X^{\beta}X^{\gamma}$ and is 0 otherwise. 

Let us  consider in  a matrix $A_m$ the minor  corresponding to some rows $X^{\beta_i}F_{\alpha_i}$ and to some columns $X^{\delta_j}$, $i,j=1,\dots, s$.   Every monomial that appears in such a minor is of the type $\prod_{i=1}^s C_{\alpha_i \gamma_{j_i}} $ where $\{j_1, \dots, j_s\}=\{1, \dots, s\}$ and $X^{\delta_{j_i}}=X^{\beta_i}X^{\gamma_{j_i}}$. Then its degree is:
$$\sum_{i=1}^s \left( \alpha_i -\gamma_{j_i}\right) =\sum_{i=1}^s \left( \alpha_i -\delta_{j_i}+\beta_i\right) =\sum_{i=1}^s \left( \alpha_i +\beta_{i}\right) -\sum_{j=1}^s\delta_{j}$$
 which only depends on the minor.
 
 \medskip
 
  3) Let $a_m=dim_kJ_m$. We consider in $A_m$ the $a_m \times a_m$ submatrix  whose columns corresponds to the minors in $J_m$ and whose columns are given  by the polynomials $X^\beta F_\alpha$ that are minimal with respect to the partial order given in Definition \ref{minimali}. Up to a permutation  this submatrix is upper-triangular with 1 on the main diagonal. We may also assume that it corresponds to the first $a_m$ rows and columns in $A_m$.   Then $\mathfrak{A}$ is generated by the determinants of $a_m+1\times a_m+1$ sub-matrices containing that above considered.  Moreover the Gaussian row-reduction of $A_m$ with respect to the first $a_m$ rows is nothing else than the  $\mathcal{G}^{**}$-reduction of the $S$-polynomials  of the special type considered defining $\mathfrak{R}'$.
\end{proof}

  If we fix a term ordering $ \prec$, we can obtain the variety $\St(J,\prec)$ (parameterizing   all the homogeneous ideal such that $\In_{\prec}(I)=J$) as the  section of $\BSt(J)$ with the linear subspace $L$ given by the ideal $\left(c_{\alpha \gamma} \ / \ X^\alpha \prec X^\gamma \right)\subset k[C]$. If for every $m\leq m_0$ ($m_0$ as in the   Remark \ref{solom0}) $J_m$ is a $\prec$-segment (i.e. it is generated by the highest $\dim_k J_m$ monomials with respect to $\prec$), then $\St(J,\preceq)$ and $\BSt(J)$ are  isomorphic as affine schemes. In fact we can obtain both varieties using the same kind of construction. The only difference between the two cases is due to the set of monomials that can appear in the  tails  of the polynomials $F_\alpha$: every monomial in $\cN(J)$ of the same degree as $X^\alpha$ for $\BSt(J)$, only those that are $\prec$-lower than $X^\alpha$   for $\St(J,\prec)$.  

For some strongly stable ideals $J$ we can find a suitable term ordering such that $\St(J,\prec)= \BSt(J)$, but there are cases in which  $\bigcup_{\prec}\St(J,\prec)\subsetneq \BSt(J)$  (Example \ref{es5}).

The existence of a term ordering such that $ \BSt(J)=\St(J,\preceq)$ has interesting consequences on the geometrical features of the stratum. In fact the $\lambda$-grading on $k[C]$ is positive if and only if such a term ordering exists. In this case we can isomorphically project  $\St(J,\prec)= \BSt(J)$ in the Zariski tangent space at the origin (see \cite{FR}). As a consequence of this projection we can prove for instance that the stratum  is always connected;   moreover  it is isomorphic to an affine space, provided the origin is a smooth point. If  for a given ideal $J$ such a term ordering does not exist, then in general we cannot embed the stratum in the Zariski tangent space at the origin (Example \ref{es5}).  However  we do not know  examples of Borel ideals $J$ such that either $\BSt(J)$  has more than one connected component  or $J$ is smooth and $\BSt(J)$ is  not rational.

\begin{example}\label{es5}  Let $J$ be the strongly stable ideal $(x^3,x^2y,xy^2,y^5)_{\geq 5}$ in $k[x,y,z]$ (where we assume $x>y>z$). For every term ordering we can find in degree 5 a monomial in $J$ lower than  a monomial in $\cN (J)$, because  $xy^2z^2 \succ x^2z^3$ and $xy^2z^2\succ y^4z$ would be in contradiction with the equality  $(xy^2z^2)^2 =x^2z^3\cdot y^4z$.
The stratum $\BSt (J)$ is isomorphic to an open subset of the Hilbert scheme of 8 points in the projective plane, which is, as  well know,   irreducible of dimension 16 (see \cite{RoggBorelCovering}). It contains all the \Gr strata $\St(J,\prec)$ for every $\prec$ and also some more point, for instance the one corresponding to the ideal $I$ of Example  \ref{ripreso}.   Computing (using some computer system tool)   the Zariski tangent space to $\BSt(J)$ at the origin $T$, one can see that it has dimension 16, and so $J$ corresponds to a smooth point on it. However $\BSt(J)$ cannot be isomorphically projected on $T$, but only on a  linear space $T'\simeq \Af^{18}$  containing $T$. In this minimal embedding, $\BSt(J)$ is realized as an affine subscheme of $\Af^{18}$,   complete intersection of two hypersurfaces of degrees 6 and 7:

$F:=c_{11}c_{13}c_{12}^2-c_{13}^2c_{11}^2c_{16}^2+c_{11}c_{13}^2c_{5}+c_{13}^2c_{8}c_{6}-c_{13}^2c_{12}c_{6}+c_{13}c_{11}c_{15}-c_{13}^2c_{11}c_{9}+c_{11}^2c_{13}^2c_{8}^2+c_{18}c_{12}c_{11}^2-2c_{10}c_{11}c_{18}+c_{8}c_{11}^2c_{18}-c_{8}^2-c_{11}c_{16}c_{13}^3c_{6}+c_{13}c_{18}c_{11}^3c_{16}+c_{10}c_{13}^3c_{6}-c_{10}c_{13}c_{8}-c_{11}c_{10}c_{13}^2c_{8}+c_{13}c_{18}c_{12}c_{11}^3-c_{11}c_{6}c_{12}c_{13}^3+c_{13}^2c_{6}c_{16}+c_{18}c_{11}^2c_{16}+c_{11}c_{7}c_{13}^2+c_{13}c_{5}-c_{11}c_{6}c_{13}^3c_{8}+c_{12}c_{8}+(-1-2c_{11}c_{13})c_{1}+c_{13}c_{11}^2c_{17}+c_{11}^2c_{8}c_{12}c_{13}^2-c_{6}c_{18}-c_{13}^2c_{11}^2c_{16}c_{12}-2c_{13}c_{10}c_{11}^2c_{18}+c_{13}c_{8}c_{11}^3c_{18}-c_{7}c_{13}+2c_{13}^2c_{10}c_{11}c_{16}+2c_{11}c_{8}c_{12}c_{13}+c_{13}c_{9}-c_{13}^2c_{10}^2-c_{13}^3c_{3}$

$ G:=c_{16}c_{13}c_{5}+c_{16}c_{13}c_{9}+c_{8}c_{6}c_{18}-c_{13}^2c_{11}^2c_{16}^3-c_{16}c_{7}c_{13}+c_{16}c_{12}c_{8}-c_{16}c_{13}^2c_{10}^2-c_{8}c_{13}c_{5}+c_{13}^2c_{6}c_{16}^2-c_{8}c_{13}c_{9}-2c_{8}^2c_{11}^2c_{18}-c_{12}c_{13}c_{5}-c_{11}^2c_{13}^2c_{8}^3+c_{10}c_{13}c_{8}^2+c_{8}c_{13}^2c_{10}^2-c_{16}c_{6}c_{18}+c_{18}c_{11}^2c_{16}^2-c_{6}^2c_{13}^4c_{8}+c_{12}c_{6}c_{18}+c_{11}c_{18}c_{9}+c_{12}c_{7}c_{13}-c_{13}^2c_{7}c_{10}-c_{16}c_{8}^2-c_{13}c_{11}c_{17}c_{10}+c_{13}c_{11}^2c_{17}c_{12}-c_{13}^4c_{16}c_{6}^2-c_{12}c_{13}^4c_{6}^2-c_{6}c_{18}^2c_{11}^2+c_{8}c_{13}^2c_{11}^2c_{16}^2+c_{8}c_{13}c_{11}^2c_{17}+c_{11}c_{8}c_{18}c_{10}+c_{13}c_{6}c_{18}c_{10}-c_{11}c_{17}c_{13}^2c_{6}+c_{12}c_{11}c_{13}^2c_{5}+c_{16}c_{11}c_{7}c_{13}^2-2c_{12}^2c_{13}^2c_{6}-c_{11}c_{18}c_{5}+c_{15}c_{11}^2c_{18}-c_{11}c_{17}c_{8}-c_{13}^2c_{8}^2c_{6}+c_{11}^3c_{17}c_{18}-c_{16}c_{10}c_{13}c_{8}-c_{13}c_{11}^2c_{18}c_{9}-c_{12}^2c_{11}c_{13}^3c_{6}+2c_{13}^2c_{9}c_{10}+c_{13}c_{11}^2c_{18}c_{7}+c_{16}c_{13}c_{11}^2c_{17}+(2c_{13}^2c_{6}-c_{16}-3c_{11}^2c_{18}+c_{8})c_{1}-c_{13}c_{11}^3c_{18}c_{16}^2-c_{13}^2c_{11}^2c_{16}c_{12}^2+c_{8}c_{18}c_{12}c_{11}^2+c_{7}c_{11}c_{13}^2c_{8}-c_{13}c_{10}^2c_{11}c_{18}-2c_{11}c_{16}c_{13}^2c_{9}-2c_{11}c_{16}^2c_{13}^3c_{6}-2c_{11}c_{8}^2c_{12}c_{13}+c_{13}c_{11}^3c_{12}^2c_{18}+2c_{12}c_{10}c_{11}c_{18}-3c_{11}c_{18}c_{13}^2c_{3}-c_{16}c_{13}^2c_{11}^2c_{6}c_{18}-c_{12}c_{13}^2c_{11}c_{9}+c_{11}^2c_{12}^2c_{8}c_{13}^2+c_{13}c_{18}c_{3}-3c_{16}c_{11}c_{6}c_{12}c_{13}^3+c_{12}c_{10}c_{13}^3c_{6}+3c_{13}c_{11}^2c_{18}c_{5}-c_{15}c_{13}^2c_{6}+2c_{8}^2c_{11}c_{13}^3c_{6}+c_{8}c_{11}c_{6}c_{12}c_{13}^3-3c_{13}c_{11}c_{18}c_{12}c_{6}+c_{16}c_{11}^2c_{13}^2c_{8}^2-c_{13}^2c_{8}c_{10}c_{12}c_{11}+2c_{11}^4c_{18}^2c_{12}+2c_{13}c_{10}c_{11}^2c_{18}c_{16}+2c_{12}c_{13}^2c_{10}c_{11}c_{16}+(-1+2c_{13}c_{11})c_{2}-2c_{13}^3c_{9}c_{6}-c_{13}^2c_{8}c_{11}^2c_{6}c_{18}+c_{13}c_{11}c_{18}c_{6}c_{16}-2c_{16}c_{10}c_{11}c_{18}+2c_{8}c_{13}^2c_{11}^2c_{16}c_{12}+c_{10}c_{11}c_{18}c_{13}^2c_{6}-c_{12}c_{13}^3c_{3}-2c_{11}c_{16}c_{13}^2c_{8}c_{10}+c_{8}^3-c_{12}^2c_{8}-c_{13}c_{11}c_{14}+c_{11}c_{6}c_{18}c_{13}c_{8}+c_{7}c_{11}c_{12}c_{13}^2+c_{12}c_{10}c_{13}c_{8}+2c_{11}^4c_{18}^2c_{16}-c_{13}c_{8}c_{11}^2c_{10}c_{18}-2c_{16}^2c_{12}c_{11}^2c_{13}^2+c_{13}c_{8}^2c_{11}^3c_{18}+2c_{13}c_{12}c_{8}c_{11}^3c_{18}+2c_{11}^4c_{18}^2c_{8}-c_{12}c_{11}^2c_{18}c_{13}^2c_{6}-4c_{11}^3c_{18}^2c_{10}+c_{16}c_{13}c_{11}c_{15}+c_{4}c_{13}^2+2c_{8}c_{11}c_{13}^2c_{9}-c_{6}c_{7}c_{13}^3-c_{13}c_{12}c_{9}-2c_{13}c_{11}^2c_{18}c_{10}c_{12}+2c_{16}c_{10}c_{13}^3c_{6}+2c_{13}^2c_{10}c_{11}c_{16}^2.$

   A MAPLE session with an explicit computation of equations defining $\BSt(J)$ in its minimal embedding can be found at:\ \texttt{http://www2.dm.unito.it/paginepersonali/roggero/Stratum1/}

\end{example}
\bibliographystyle{plain}

\end{document}